\newtheorem{thm}{Theorem}[section]
\newtheorem{rmk}[thm]{Remark}
\newtheorem{con}[thm]{Conjecture}
\DeclareMathOperator*{\im}{im}
\newcommand{\PP}{\mathbb{P}}
\newcommand {\C} {{\mathbb C}}
\newcommand {\R} {{\mathbb R}}
\newcommand {\Z} {{\mathbb Z}}
\newcommand {\Q} {{\mathbb Q}}
\newcommand {\E} {{\mathcal E}}
 \newtheorem{lemma}{Lemma}[section]
 \newtheorem{prop}{Proposition}[section]
\begin{document}
\title{ Geometric  Hodge structures with prescribed Hodge numbers.}
\author{
        Donu Arapura    
}
\thanks {Partially supported by the NSF and at the IAS by the Ralph E. and
  Doris M. Hansmann fund.}

\address{Department of Mathematics\\
Purdue University\\
West Lafayette, IN 47907\\
U.S.A.}
 \maketitle

Let us call a pure, and necessarily effective  polarizable,  Hodge
structure geometric if it is contained in  the cohomology of a smooth complex  projective
variety. A natural question is  which polarizable  Hodge structures are
geometric?  Our goals in this note are twofold.
We first of all show that there are no numerical
restrictions on geometric Hodge structures. That is for any set of Hodge numbers (subject to
the obvious constraints imposed by Hodge symmetry and effectivity), there exists a geometric
Hodge structure with precisely these Hodge numbers.  The argument is
fairly easy, but we hope that there is some pedagogical value in
writing it out. Our example is contained in
the cohomology of a power $E^N$ of a CM  elliptic curve. Since the
Hodge conjecture is known for such varieties, we get a  slightly
stronger statement, that our example is the Hodge realization of
a Grothendieck  motive.  The second part of this note
is less elementary and  more speculative. We try to address the question about where  all the  two dimensional
irreducible geometric  Hodge structures come from. The answer, assuming the
Hodge conjecture, is that they come from either from elliptic curves or motives over $\bar
\Q$. Examples of the latter type include the Hodge structures constructed
in the first section, and Hodge structures  associated to
modular forms discussed in the second. Although we do not have a good
conjecture about  how to describe all the two dimensional $\bar \Q$-motivic Hodge
structures, the ones arising from motives over $\Q$  should be exactly
the modular examples.

Coming back to part one,
we should mention that Schreieder \cite{scr} has solved the related,
but more difficult, problem of finding a smooth projective variety with
a prescribed set of Hodge numbers in a given degree $k$ (under
suitable hypotheses when $k$ is even). This does
imply the main result here for odd $k$. Nevertheless, the construction
is different and somewhat more involved.
We thank Greg Pearlstein for first bringing  Schreieder's work to our
attention, and the referee for  other useful remarks.

\section{Main theorem}

We will work exclusively with rational Hodge structures below.
It is worth observing that since  the category of polarizable Hodge
structures is semisimple \cite{moonen}, we have:

\begin{prop}\label{prop:1}
A geometric Hodge structure  of weight $k$  is a direct summand of
$H^k(X,\Q)$ for some   smooth complex  projective
variety $X$.
\end{prop}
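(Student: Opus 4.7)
The plan is to unpack the definition and then apply semisimplicity. By definition a geometric Hodge structure $H$ of weight $k$ sits as a sub-Hodge structure inside $H^*(X,\Q) = \bigoplus_j H^j(X,\Q)$ for some smooth complex projective variety $X$. Since the pieces $H^j(X,\Q)$ are pure of weight $j$ and morphisms of pure Hodge structures of different weights vanish, the inclusion of $H$ factors through the single summand of weight $k$, i.e.\ $H \subseteq H^k(X,\Q)$ as a sub-Hodge structure.

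Next I would invoke the semisimplicity of the category of polarizable rational Hodge structures, as cited from \cite{moonen}. Note that $H^k(X,\Q)$ is polarizable (it carries the Hodge--Lefschetz polarization coming from a projective embedding of $X$), and $H$, being a sub-object of a polarizable Hodge structure, is itself polarizable. Semisimplicity then guarantees that the sub-Hodge structure $H \subseteq H^k(X,\Q)$ admits a Hodge-theoretic complement, so that $H$ is in fact a direct summand.

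There is not really a hard step here: the only subtlety is recognizing that ``contained in the cohomology'' automatically means ``contained in the degree-$k$ part'' once one remembers that Hodge structures of different weights do not talk to each other, and then citing semisimplicity to upgrade ``subobject'' to ``direct summand.'' The proposition is essentially a repackaging of the definition, and its purpose is to allow the rest of the paper to replace the condition ``sub-Hodge structure of $H^*(X,\Q)$'' with the cleaner ``direct summand of $H^k(X,\Q)$.''
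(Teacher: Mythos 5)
Your argument is correct and matches the paper's reasoning: the paper offers no separate proof, simply noting beforehand that semisimplicity of the category of polarizable Hodge structures \cite{moonen} yields the proposition. Your extra observation that the inclusion lands in the weight-$k$ piece because morphisms between pure Hodge structures of different weights vanish is exactly the implicit step, so the proposal is fine as written.
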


\begin{thm}
  Given a set of nonnegative integers $g^{k,0}, g^{k-1,1},\ldots
  g^{0,k}$, satisfying $k\ge 0$ and $g^{p,q}=g^{q,p}$, there exists a geometric
  rational  Hodge
  structure $H$ with $\dim H^{p,q}=g^{p,q}$.
\end{thm}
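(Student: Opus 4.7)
Take $E$ to be an elliptic curve with complex multiplication by an imaginary quadratic field $K = \mathrm{End}(E) \otimes \Q$, and set $V = H^1(E, \Q)$. The two embeddings $\tau, \bar\tau : K \hookrightarrow \C$ diagonalize $V \otimes \C = V^{1,0} \oplus V^{0,1}$ as $\tau$- and $\bar\tau$-eigenspaces, so the Hodge decomposition of $V \otimes \C$ coincides with its $K$-eigenspace decomposition. I will realize $H$ inside $H^k(E^N, \Q)$ for $N$ sufficiently large, as a $\Q$-rational direct summand cut out by idempotents coming from the diagonal CM action on K\"unneth factors.

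\textbf{Building blocks.} On the K\"unneth factor $V^{\otimes k} \subset H^k(E^k, \Q)$, the $\Q$-subalgebra of $\mathrm{End}_\Q(V^{\otimes k})$ generated by the $k$ commuting $K$-actions (one per tensor slot) is $K^{\otimes k}$; tensoring with $\C$ splits $V^{\otimes k}$ into $2^k$ one-dimensional joint eigenlines $L_\epsilon$ indexed by $\epsilon \in \{\tau, \bar\tau\}^k$, with $L_\epsilon$ of Hodge type $(p, q)$ where $p = \#\{i : \epsilon_i = \tau\}$ and $q = k - p$. Complex conjugation swaps $\tau \leftrightarrow \bar\tau$ coordinatewise, so every Galois orbit is a pair $\{\epsilon, \bar\epsilon\}$; the idempotent onto $L_\epsilon \oplus L_{\bar\epsilon}$ is conjugation-invariant and hence lies in the $\Q$-algebra $K^{\otimes k}$. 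Since $K^{\otimes k}$ is $\Q$-spanned by elements of the form $a_1^* \otimes \cdots \otimes a_k^*$, each the pullback of a diagonal endomorphism of $E^k$, this idempotent is induced by an algebraic correspondence. It therefore cuts out a $\Q$-rational Hodge direct summand of $H^k(E^k, \Q)$ of dimension $2$, with Hodge numbers $h^{p, q} = h^{q, p} = 1$ (or $h^{p, p} = 2$ when $p = q$).

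\textbf{Assembly.} For each pair $(p, q)$ with $p > q$ and $p + q = k$, stack up $g^{p, q}$ such rank-$2$ blocks, placed in K\"unneth-disjoint subspaces of $H^k(E^N, \Q)$ by selecting distinct subsets of $k$ factors of $E$ out of $N$ and distinct Galois orbits within each; this contributes exactly $g^{p, q}$ to each of $h^{p, q}$ and $h^{q, p}$. For the diagonal type $p = q = k/2$ (only when $k$ is even), use instead $g^{p, p}$ K\"unneth-disjoint copies of the rank-$1$ Tate summand $\Q(-p) = (\wedge^2 V)^{\otimes p} = H^2(E, \Q)^{\otimes p} \subset H^{2p}(E^p, \Q)$. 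Choosing $N$ large relative to the $g^{p, q}$, the direct sum of all these pieces is the desired Hodge structure $H$, manifestly a direct summand of $H^k(E^N, \Q)$, which by Proposition~\ref{prop:1} is enough. (If $k = 0$ one instead takes a disjoint union of $g^{0,0}$ points.)

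\textbf{Main point to verify.} The only non-formal step is the building-block construction: that the sum of the two eigenlines in a Galois orbit descends to a $\Q$-rational Hodge sub-structure of the claimed bidegree. This follows from the CM identification of the Hodge decomposition with the $K$-eigenspace decomposition on $V \otimes \C$, combined with Galois descent for the semisimple $\Q$-algebra $K^{\otimes k}$; both are routine, and I do not anticipate a serious obstacle beyond the bookkeeping required to fit all the prescribed Hodge numbers simultaneously inside a single $E^N$.
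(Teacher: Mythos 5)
Your construction is correct, and it reaches the same building blocks as the paper by a different mechanism. The paper first produces, via Mumford--Tate group formalism for the specific curve $E=\C/(\Z i\oplus\Z)$, an irreducible two dimensional structure $G$ of type $(n,0),(0,n)$ inside $S^nV\subset V^{\otimes n}$ (Lemma \ref{lemma:2}), then obtains $H(p,q)=G\otimes\Q(-q)$ by a Tate twist (Lemma \ref{lemma:3}) and assembles the answer using the closure properties of Lemma \ref{lemma:1}. You instead decompose the K\"unneth factor $V^{\otimes k}$ under the \'etale $\Q$-algebra $K^{\otimes k}$ and observe that the sum of a joint eigenline and its complex conjugate is cut out by a Galois-invariant, hence rational, idempotent that is moreover a Hodge endomorphism (indeed an algebraic correspondence built from endomorphisms of $E$); this yields the $(p,q),(q,p)$ blocks in one step, already in the correct bidegree, and you place all blocks and the diagonal Tate pieces $H^2(E,\Q)^{\otimes k/2}$ in disjoint K\"unneth summands of $H^k(E^N,\Q)$. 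This is essentially the ``by hand'' proof the paper alludes to but does not write out, and your Galois-descent justification of rationality is complete and correct, as is the parity-aware treatment of the type $(k/2,k/2)$ part and the $k=0$ edge case. What the paper's route buys is modularity and a clean conceptual framing (the blocks are visibly simple as representations of $MT(V)$, so irreducible as Hodge structures); what your route buys is that the projectors are manifestly algebraic cycles on $E^k\times E^k$, which, combined with the algebraicity of K\"unneth projectors for abelian varieties, brings you close to the paper's subsequent assertion that $H$ is the realization of a motive without having to invoke the Hodge conjecture for $E^{2N}$. The only inessential difference is that you realize everything inside a single $H^k(E^N,\Q)$ by bookkeeping, while the paper takes abstract direct sums and then notes the same containment afterwards.
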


\begin{lemma}\label{lemma:1}
The Tate structure $\Q(-n)$ is geometric for $n\ge 0$.
  Let $V_1$ and $V_2$ be geometric Hodge structures of weight $n_1$
  and $n_2$ respectively, then $V_1\otimes V_2$ is geometric. If $n_1=n_2$,
  then $V_1\oplus V_2$ is also geometric.
\end{lemma}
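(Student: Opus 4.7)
My plan is to realize each of the three structures as a direct summand of the cohomology of an explicit smooth projective variety; by Proposition~\ref{prop:1}, exhibiting a direct summand is enough.

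For the Tate structure, I would simply take $X = \PP^n$ and use the well-known isomorphism $H^{2n}(\PP^n, \Q) \cong \Q(-n)$ of Hodge structures (with $n=0$ handled by a point). An alternative model is $\Q(-n) = \Q(-1)^{\otimes n}$ realized as a summand of $H^{2n}((\PP^1)^n, \Q)$ via K\"unneth, which also foreshadows the tensor-product step.

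For $V_1 \otimes V_2$, pick smooth projective $X_1, X_2$ with $V_i$ a direct summand of $H^{n_i}(X_i, \Q)$. The K\"unneth formula gives an isomorphism of Hodge structures
\[
H^{n_1+n_2}(X_1 \times X_2, \Q) \;\cong\; \bigoplus_{i+j = n_1+n_2} H^i(X_1, \Q) \otimes H^j(X_2, \Q),
\]
so the summand in degrees $(n_1,n_2)$ contains $V_1 \otimes V_2$ as a direct summand. For $V_1 \oplus V_2$ with $n_1 = n_2 = n$, the cleanest choice is the disjoint union $X_1 \sqcup X_2$, which is smooth projective (embed $X_i \hookrightarrow \PP^{N_i}$ and then place the two into disjoint linear subspaces of a common $\PP^N$) and whose cohomology splits as $H^n(X_1, \Q) \oplus H^n(X_2, \Q)$. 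One could equally well use the K\"unneth summands $H^n(X_1) \otimes H^0(X_2)$ and $H^0(X_1) \otimes H^n(X_2)$ inside $H^n(X_1 \times X_2, \Q)$.

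The argument is essentially formal, and I do not anticipate a real obstacle; the only things being used are the standard compatibility of the K\"unneth and disjoint-union decompositions with Hodge structures, together with the transitivity of ``direct summand of.'' The conceptual point worth highlighting is that Proposition~\ref{prop:1} lets one replace the a priori weaker notion of ``contained in'' by ``direct summand of,'' which is what makes these constructions stable under tensor products and sums.
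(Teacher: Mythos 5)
Your proposal is correct and follows essentially the same route as the paper: $\Q(-n)=H^{2n}(\PP^n,\Q)$ for the Tate structure, and the K\"unneth decomposition of $H^{n_1+n_2}(X_1\times X_2,\Q)$ for the tensor product (the paper's ``$k=n_1n_2$'' is evidently a typo for $n_1+n_2$). For the direct sum the paper also stays inside $H^n(X_1\times X_2)$ via the K\"unneth summands $H^n(X_1)\otimes H^0(X_2)$ and $H^0(X_1)\otimes H^n(X_2)$, which you mention as an alternative; your disjoint-union variant is an equally valid minor variation.
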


\begin{proof}
We have $\Q(-n) =H^{2n}(\PP^n,\Q)$.
  By assumption, there exists smooth projective varieties $X_i$ such
  that $V_i$ is a summand of $H^{n_i}(X_i)$. Then $V_1\otimes V_2$ and
  $V_1\oplus V_2$ are summands of $H^k(X_1\times X_2)$ where
  $k=n_1n_2$ and $k=n_1=n_2$ respectively.
\end{proof}

\begin{lemma}\label{lemma:2}
  For any $n>0$, there exists a geometric Hodge structure $G$ with
  $\dim G^{n,0}= \dim G^{0,n}=1$ and the remaining Hodge numbers equal
  to zero.
\end{lemma}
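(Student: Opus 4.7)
The plan is to realize $G$ as a direct summand of $H^n(E^n, \Q)$ for $E$ an elliptic curve with complex multiplication. Fix $E$ with $\mathrm{End}(E) \otimes \Q = K$, an imaginary quadratic field. Then $H^1(E, \Q)$ is one-dimensional as a $K$-vector space, and $K$ acts by morphisms of Hodge structure (in fact by algebraic correspondences on $E$). The two embeddings $\sigma, \bar{\sigma} : K \hookrightarrow \C$ act on $H^{1,0}(E)$ and $H^{0,1}(E)$ respectively.

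By the Künneth formula, the tensor power $V := H^1(E, \Q)^{\otimes n}$ is a direct summand of $H^n(E^n, \Q)$, so by Proposition~\ref{prop:1} and semisimplicity it suffices to exhibit an appropriate sub-Hodge structure $G \subset V$. The algebra $A := K^{\otimes n}$ acts on $V$ by endomorphisms of Hodge structure. After base change, $A \otimes_\Q \C$ decomposes as $\C^{2^n}$ via the characters $\chi : \{1, \ldots, n\} \to \{\sigma, \bar{\sigma}\}$, and the corresponding primitive idempotent $e_\chi$ projects $V \otimes \C$ onto a one-dimensional subspace of Hodge type $(|\chi^{-1}(\sigma)|,\, |\chi^{-1}(\bar{\sigma})|)$.

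The constant characters $\chi_+ \equiv \sigma$ and $\chi_- \equiv \bar{\sigma}$ isolate exactly the types $(n, 0)$ and $(0, n)$. Since complex conjugation interchanges $\sigma$ and $\bar{\sigma}$, the sum $e := e_{\chi_+} + e_{\chi_-}$ is Galois-invariant and therefore lies in $A$ itself. Then $G := e \cdot V \subset V \subset H^n(E^n, \Q)$ is a $\Q$-rational sub-Hodge structure whose only nonzero Hodge numbers are $\dim G^{n,0} = \dim G^{0,n} = 1$, which is what we want.

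The main (and really only) delicate step is the Galois descent of the idempotent $e$ from $A \otimes \C$ to $A$; this is handled precisely by pairing the complex-conjugate characters $\chi_+$ and $\chi_-$, which is forced on us since neither $e_{\chi_+}$ nor $e_{\chi_-}$ is individually $\Q$-rational. Everything else reduces to the Künneth decomposition of $H^n(E^n)$, the fact that $K$-multiplication comes from algebraic self-correspondences of $E$, and the semisimplicity invoked in Proposition~\ref{prop:1}.
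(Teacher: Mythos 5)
Your proof is correct, and it lands on exactly the same object as the paper --- the two-dimensional rational sub-Hodge structure of $H^1(E,\Q)^{\otimes n}\subset H^n(E^n,\Q)$ spanned over $\C$ by the $n$-th tensor powers of the holomorphic and antiholomorphic classes --- but you justify it by a different mechanism. The paper works with the Mumford--Tate group of the CM curve: it identifies $G$ with the composition of the tautological representation of $MT(H^1(E))\cong Res_{K/\Q}\mathbb{G}_m$ (up to scalars) with the $n$-th power map, observes this is an irreducible representation and hence a simple Hodge structure of type $(n,0),(0,n)$, and embeds it into $H^1(E)^{\otimes n}$. You instead use the action of the \'etale algebra $K^{\otimes n}$ on $H^1(E,\Q)^{\otimes n}$ (free of rank one over it), decompose over $\C$ into the $2^n$ character lines, and descend the idempotent $e_{\chi_+}+e_{\chi_-}$ to $\Q$ because the two constant characters form a single Galois orbit; the resulting $eV$ is visibly a rational sub-Hodge structure with $h^{n,0}=h^{0,n}=1$. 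This is essentially the ``by hand'' argument the paper alludes to before opting for the Mumford--Tate formalism. What each buys: the paper's route gives irreducibility of $G$ for free (not needed for the lemma, but pleasant), while yours avoids Mumford--Tate groups entirely and makes the projector manifestly a $\Q$-linear combination of algebraic self-correspondences of $E^n$, which meshes nicely with the later claim that $H$ is the realization of a motive (where the paper instead invokes the Hodge conjecture for powers of $E$). One tiny overstatement: to conclude $G$ is geometric you only need $G\subset H^n(E^n,\Q)$ as a sub-Hodge structure, by the definition in the paper; the appeal to semisimplicity and Proposition~\ref{prop:1} is harmless but not required at this step.
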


\begin{proof}
Although, it is not difficult to prove by hand, it is a bit cleaner if we
make use of basic facts about Mumford-Tate groups, cf \cite[appendix
B]{lewis} or \cite{moonen}.
In order to spell things out as explicitly as possible,  we work with
the specific  elliptic curve
$E=\C/\Z i\oplus \Z$. This has complex multiplication by
$K=\Q(i)$. Let $V= H^1(E, \Q)$. We choose a basis $v_1, v_2$ for $V$ dual to
the basis $i, 1$ of the lattice $\Z i\oplus \Z$ . Then $dz= iv_1+ v_2$
and $d\bar z= -iv_1+v_2$ determines the Hodge decomposition on
$V\otimes \C$. We define  a homomorphism $h$ of the unit circle into
$GL(V\otimes \R)$ by rotations
$$h(\theta) =
\begin{pmatrix}
  \cos\theta & -\sin \theta\\ \sin\theta & \cos\theta
\end{pmatrix}
$$
Then $h(\theta)dz =e^{i\theta}dz$, and $h(\theta)d\bar
z=e^{-i\theta}d\bar z$. Let $SMT(V)$ denote the Weil restriction
$$Res_{K/\Q}\mathbb{G}_m = \left\{
\begin{pmatrix}
  a & -b \\ b & a
\end{pmatrix}
\mid (a,b)\in \Q^2, (a,b)\not= (0,0) \right\}
$$
This is clearly  the smallest $\Q$-algebraic group whose real points contain
$\im h$. In other words, $SMT(V)$ is the special Mumford-Tate group or
Hodge group of $V$. The
Mumford-Tate group $MT(V) $ is defined similarly, as the smallest
$\Q$-algebraic group whose real points contain the image of Deligne's
torus $\mathbb{S}(\R)=\C^*$. It  works out to the product of $SMT(V)$ and the group
of nonzero scalar matrices.  The significance of this group comes from
the Tannakian interpretation: $MT(V)$ is the group whose category of
representations is equivalent to the tensor category generated by
$V$. Since $MT(V)$ was described as a matrix group, it comes with an
obvious representation $\rho:MT(V)\to  GL(V)$. For an integer $n$, let
$V_n$ be the representation of this group given by composing $\rho$ with
the $n$th power homomorphism $MT(V)\to MT(V)$. Now let us suppose that
$n>0$. Then $V_n$ is irreducible, and therefore simple as a  Hodge
structure. The elements $dz, d\bar z$ still give a basis of $V_n
\otimes \C$, but
now the circle acts by $dz\mapsto e^{in\theta} dz$ and $d\bar z\mapsto
e^{-in\theta} dz$. Thus these vectors span $V_n^{n,0}$ and
$V_n^{0,n}$.  We have an embedding $V_n\subseteq
S^nV\subset V^{\otimes n}$ given by identifying it with the span of $v_1^n$ and
$v_2^n$. Therefore the previous lemma implies that $V_n$ is geometric.
Thus $G=V_n$ is  the desired Hodge structure.
\end{proof}

\begin{lemma}\label{lemma:3}
  Given integers $p>q\ge 0$, there exists a
  geometric Hodge structure $H(p,q)$ with $\dim H(p,q)^{p,q}=\dim
  H(p,q)^{q,p}=1$ and the remaining Hodge numbers equal to zero.
\end{lemma}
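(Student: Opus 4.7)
The plan is to build $H(p,q)$ by combining the two previous lemmas: take the Hodge structure $G$ from Lemma \ref{lemma:2} with $n = p-q$, which is available because $p > q$ ensures $n > 0$, and Tate-twist it down by $q$. Concretely, I would set
\[
H(p,q) := G \otimes \Q(-q),
\]
where $G$ has $G^{p-q,0}$ and $G^{0,p-q}$ each one-dimensional and all other Hodge numbers zero.

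The key verification is on Hodge numbers: tensoring with $\Q(-q)$ shifts the bigrading by $(q,q)$, so the only nonzero pieces of $H(p,q)$ are $G^{p-q,0}\otimes \Q(-q)^{q,q}$ in bidegree $(p,q)$ and $G^{0,p-q}\otimes \Q(-q)^{q,q}$ in bidegree $(q,p)$, each of dimension $1$. The weight is $(p-q) + 2q = p+q$, as required.

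Geometricity is then immediate from Lemma \ref{lemma:1}: the Tate structure $\Q(-q)$ is geometric for $q \geq 0$, Lemma \ref{lemma:2} gives that $G$ is geometric, and the tensor product of two geometric Hodge structures is geometric. So $H(p,q)$ is geometric, and it is realized as a summand of $H^{p-q}(X) \otimes H^{2q}(\PP^q) \subseteq H^{p+q}(X \times \PP^q)$ where $X$ is the variety provided by Lemma \ref{lemma:2}.

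There is no real obstacle here; the only thing to note is why the hypothesis $p > q$ is used, namely to apply Lemma \ref{lemma:2} with positive exponent $n = p-q$. The case $q = 0$ (where one simply takes $H(p,0) = G$ with $n = p$) is absorbed into the same argument since $\Q(0) = \Q$ acts trivially in the tensor product.
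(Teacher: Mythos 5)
Your construction $H(p,q)=G\otimes \Q(-q)$ with $G$ from Lemma \ref{lemma:2} for $n=p-q$, together with Lemma \ref{lemma:1} for geometricity, is exactly the paper's proof. The additional verification of the Hodge-number shift is correct; nothing further is needed.
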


\begin{proof}
Let $G$ be the Hodge structure
  constructed in the previous lemma with $n=p-q$. Then $H(p,q)= G\otimes \Q(-q)$ will satisfy the above conditions,.
\end{proof}

\begin{rmk}
  We can replace $E$ by any CM elliptic curve $E'$, and the same construction
  works. We denote the corresponding Hodge structure by  $H_{E'}(p,q)$. However, the method will fail for non CM curves, because the
  Mumford-Tate group will no longer be abelian.
\end{rmk}

\begin{proof}[Proof of theorem]
We get the desired Hodge structure by taking  sums of  the  Hodge structures
constructed in lemma \ref{lemma:3}, and  the appropriate number of Tate structures when $k$ is even.
More explicitly
$$H= \bigoplus_{p>q} H(p,q)^{\oplus g^{pq}} \> \underbrace{\oplus
  \Q(-k/2)^{\oplus g^{k/2,k/2}}}_{\text{$k$ even}}$$
\end{proof}

\begin{rmk}
  As the referee points out, this can also proved by invoking
  \cite{abdulali} but we prefer to keep the argument elementary and
  self-contained.
\end{rmk}

The proof actually shows that $H\subset H^k(E^N,\Q)$ for some $N$. We
can use this fact to get the stronger conclusion stated in the
introduction; it says roughly that the inclusion is also defined algebro-geometrically.
In order to make a precise statement, we recall that a motive, or more
precisely an effective  pure motive with respect to homological equivalence,
consists of a smooth projective variety $X$ together with an
algebraic cycle $p\in H^*(X\times X,\Q)$ such that $p\circ p=p$
\cite{kleiman}. The composition $\circ$ is the usual one for
correspondences \cite[chap 16]{fulton}. It follows that $p$ is an idempotent of
the ring of Hodge endomorphisms $\prod_i End_{HS}(H^i(X))$. Thus we get a mixed Hodge
structure $p(H^*(X,\Q))$ which is by definition the Hodge realization of the
motive $(X,p)$. We quickly run up against fundamental difficulties. For
instance, it is unknown,  whether $H^k(X)$
is the realization of a motive for arbitrary $X$ and $k$. For this, we would need to
know that the K\"unneth components of the diagonal $\Delta\subset
X\times X$ are algebraic, and this is one Grothendieck's standard
conjectures \cite{groth}. Fortunately,
it is not an issue in our example, because the Hodge conjecture holds
for $E^{2N}$ \cite[appendix B, \S 3]{lewis} and this implies
Grothendieck's conjecture. Thus $H^k(E^N)$ is the
realization of a motive. Since $H$ is a summand of $H^k(E^N)$, it is
given by the image of an idempotent in $End_{HS}(H^k(E^N))$. By the
Hodge conjecture, this is algebraic. Thus to summarize:

\begin{prop}
 The  geometric Hodge structure $H$, given in the theorem, is the Hodge realization of a
  motive.
\end{prop}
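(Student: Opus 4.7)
The plan is to exhibit $H$ as the Hodge realization of a motive of the form $(E^N, p)$, where $E^N$ is the power of the CM elliptic curve produced by the construction in the theorem. First, because the category of polarizable Hodge structures is semisimple (Proposition \ref{prop:1}) and $H$ is a direct summand of $H^k(E^N,\Q)$, there is a canonical Hodge-theoretic idempotent $p_H \in End_{HS}(H^k(E^N,\Q))$ with image $H$. Via the K\"unneth formula and Poincar\'e duality, $p_H$ corresponds to a Hodge class $[p_H] \in H^{2\dim E^N}(E^N\times E^N, \Q)$ of type $(\dim E^N,\dim E^N)$.

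Next, one needs an honest algebraic idempotent on $E^N\times E^N$ that acts as $p_H$ on $H^k$ and as zero on all other cohomological degrees. For this, I would invoke the Hodge conjecture for powers of the CM elliptic curve $E$, which holds by \cite[appendix B, \S 3]{lewis} for $E^{2N}=E^N\times E^N$. This at once implies that the K\"unneth component $\pi_k$ of the diagonal $\Delta_{E^N}$, as well as the class $[p_H]$ itself, are represented by algebraic cycles. Composing yields $p := \pi_k \circ [p_H] \circ \pi_k$, an algebraic correspondence which is an idempotent in the ring of self-correspondences of $E^N$ and whose action on $H^*(E^N,\Q)$ is exactly $p_H$ in degree $k$ and zero elsewhere.

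The motive $(E^N, p)$ therefore has Hodge realization $p(H^*(E^N,\Q)) = H$, which is what we wanted. The only deep ingredient is the Hodge conjecture for the abelian variety $E^{2N}$; everything else (semisimplicity, the K\"unneth and Poincar\'e duality identifications, and the formalism of composing correspondences) is formal. So the ``main obstacle'' is really packaged into the cited classical result, and there is no further substantive difficulty to overcome once that input is granted.
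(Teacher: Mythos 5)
Your argument is correct and is essentially the paper's own proof: both use semisimplicity to get a Hodge-theoretic idempotent cutting out $H$ inside $H^k(E^N,\Q)$, then invoke the known Hodge conjecture for $E^{2N}$ to make the K\"unneth projector and this idempotent algebraic, yielding a motive $(E^N,p)$ with realization $H$. The only cosmetic difference is that you write the algebraic idempotent explicitly as $\pi_k\circ[p_H]\circ\pi_k$, which is fine (and in fact the class $[p_H]$ alone already acts by zero outside degree $k$).
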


\section{Two dimensional geometric Hodge structures}

Our goal now is to obtain an understanding of where  two
dimensional geometric Hodge structures come from.  We may as well
assume that the structures are irreducible because otherwise they are
just sums of Tate structures. Given a subfield
$K\subset \C$, by a motive over $K$  we will mean a pair $(X,p)$, as
described earlier, such that both
$X$ and the algebraic cycle representing $p$ are defined over $K$.

\begin{thm}
Let $H$ be an irreducible two dimensional geometric Hodge structure.
Then, assuming the Hodge conjecture, there are two nonexclusive
possibilities.
\begin{enumerate}
\item[(a)] $H=H^1(E)(-m)$, where $E$ is an elliptic curve and $m\ge
  0$, or
\item[(b)] $H$ comes from a motive over $\bar \Q$.
\end{enumerate}
\end{thm}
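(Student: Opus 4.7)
The plan is to run a case analysis on the weight of $H$ after normalizing by a Tate twist. Since a Tate twist by $\Q(-m)$ preserves both geometricity and being realized by a motive over $\bar\Q$ (the Tate motive lives on $\PP^m$), I may assume $h^{n,0}=h^{0,n}=1$ and all other Hodge numbers vanish, for some $n\ge 1$. For $n=1$, $H$ is a polarized weight-one Hodge structure of dimension $2$, so the classical equivalence with elliptic curves gives $H=H^1(E)$ for a unique $E$, putting us in case (a). The real task is to establish (b) when $n\ge 2$.

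The key technical input for $n\ge 2$ is a rigidity observation. In any polarized variation of Hodge structure $\cV$ on a smooth connected base $S$ whose fibers have Hodge types only $(n,0)$ and $(0,n)$, Griffiths transversality forces
\[
\nabla F^n\cV\subset F^{n-1}\cV\otimes\Omega^1_S = F^n\cV\otimes\Omega^1_S,
\]
since there are no intermediate Hodge types; hence $F^n\cV$ is flat and the Hodge structure is locally constant on $S$.

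Next I would take a smooth projective $X/\C$ with $H\subset H^n(X,\Q)$ a direct summand (Proposition \ref{prop:1}), and let $p_H$ be the associated Hodge idempotent; by the Hodge conjecture applied to $X\times X$, $p_H$ is represented by an algebraic cycle $\Gamma$. After fixing a polarization on $X$, let $M$ be a Hilbert-scheme component, defined over $\bar\Q$, parametrizing polarized varieties with the Hilbert polynomial of $X$, and let $\cC\to M$ be the relative Hilbert scheme parametrizing algebraic cycles on self-products of the universal family having the numerical invariants of $\Gamma$. Then $([X],\Gamma)$ is a $\C$-point of $\cC$. Inside $\cC$, consider the locus $S$ on which the universal cycle is idempotent modulo homological equivalence and cuts out a sub-Hodge structure with the Hodge numbers of $H$; by Cattani-Deligne-Kaplan together with the rigidity observation, $S$ is a closed algebraic subvariety of $\cC$, and since its defining conditions are algebro-geometric on $\cC$, it descends to a subscheme over $\bar\Q$.

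The main obstacle is to make the descent to $\bar\Q$ precise enough to conclude $S(\bar\Q)\ne\emptyset$, which requires verifying that the idempotency condition and the correct Hodge numbers really are captured by $\bar\Q$-algebraic conditions on $\cC$, rather than only by transcendental ones on the period domain. Granted this, picking any $([X_0],\Gamma_0)\in S(\bar\Q)$ yields, by rigidity, a summand of $H^n(X_0,\Q)$ cut out by $\Gamma_0$ and isomorphic to $H$ as a Hodge structure, so $H$ is the realization of the $\bar\Q$-motive $(X_0,\Gamma_0)$ and we are in case (b). The role of the Hodge conjecture is precisely to replace the a priori transcendental Hodge locus by the algebraic-cycle locus in $\cC$, which is intrinsically defined over $\bar\Q$, while the rigidity ensures that the Hodge structure recovered at any $\bar\Q$-point is genuinely isomorphic to the original $H$, not merely deformation-equivalent.
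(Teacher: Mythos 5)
Your case (a) and your rigidity observation (Griffiths transversality kills the derivative of the period map when there are no adjacent nonzero Hodge types) are exactly the right ingredients, and they match the paper's key point. But there are two genuine gaps. First, the opening normalization is not legitimate for case (b): you justify the reduction by noting that twisting by $\Q(-m)$ preserves geometricity, but what you actually need is the opposite direction, namely that the \emph{untwist} $H(q)$ of a geometric $H$ of type $(p,q),(q,p)$ is again a summand of the cohomology of a smooth projective variety. That is not known to follow from geometricity of $H$ (it is essentially a case of the generalized Hodge conjecture, not the Hodge conjecture you are allowed to assume), and your argument then feeds this unverified structure into the statement ``$H\subset H^n(X,\Q)$ a direct summand.'' The fix is simply not to normalize: your rigidity computation works verbatim for types $(p,q),(q,p)$ with $p-q\ge 2$, since $F^{p-1}\cV=F^p\cV$, which is precisely how the paper phrases its proposition (``no consecutive nonzero Hodge numbers'').

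Second, and more seriously, the mechanism by which you reach a point over $\bar\Q$ is left incomplete at exactly the decisive step, as you yourself concede (``The main obstacle is to make the descent to $\bar\Q$ precise enough\dots Granted this\dots''). Idempotency modulo homological equivalence and the prescribed rank of the induced sub-Hodge structure are transcendental conditions on your parameter space $\cC$, and it is not established that they cut out a $\bar\Q$-subvariety; moreover Hilbert schemes parametrize subschemes rather than cycles with (possibly negative) integer coefficients, and one must also confine attention to the locus of smooth fibres and to the connected component of $([X],\Gamma)$ so that rigidity actually identifies the fibrewise Hodge structure with $H$. The paper avoids all of this with a simpler spreading-out argument: the pair $(X,p)$, with $p$ algebraic by the Hodge conjecture, is defined over a finitely generated extension $K/\bar\Q$; taking $S/\bar\Q$ with function field $K$, shrinking $S$ so that one has a smooth projective family $\mathcal{X}\to S$ and flat relative cycles, the image of the relative correspondence is a variation of Hodge structure on $S(\C)$, the period map is constant by your rigidity observation, and one specializes at any $s\in S(\bar\Q)$, which is automatically nonempty since $S$ is a variety over $\bar\Q$ by construction. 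No moduli of polarized varieties, no Cattani--Deligne--Kaplan, and no descent problem arise, because $\bar\Q$-rationality is built in from the start rather than recovered at the end.
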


\begin{proof}
  We recall that the level of $H$ is the largest value of
$|p-q|$ such that $H^{pq}\not=0$.  If the level is $1$ then
after Tate twisting, we can assume that $H$ is of type
$(1,0),(0,1)$ and  of course still $2$ dimensional. So it  must coincide with
$H^1(E)$ where $E= H/(H^{10}+H_\Z)$ for some lattice $H_\Z\subset H_\Q$.
 If  the level of $H$ is greater than $1$, we obtain (b) from the
next proposition.
\end{proof}

\begin{prop}\label{prop:1}
  Assuming the Hodge conjecture, any  geometric Hodge structure with
  no consecutive nonzero Hodge numbers comes from a motive over $\bar
  \Q$. In particular, this conclusion applies to two dimensional geometric Hodge
  structures of level at least two.
\end{prop}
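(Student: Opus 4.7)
The plan is to use the Hodge conjecture to realize $H$ as cut out from the cohomology of some $X$ by an algebraic projector, spread this data over a $\Q$-rational moduli space, and then use the hypothesis on Hodge numbers to argue that the resulting variation of Hodge structure is rigid enough that specializing to a nearby $\bar\Q$-point does not change $H$.

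By semisimplicity of polarizable Hodge structures (as recalled in the opening proposition), we first write $H$ as a direct summand of $H^k(X,\Q)$ for some smooth complex projective $X$. Granting the Hodge conjecture on $X\times X$, the idempotent projector $\pi\colon H^k(X)\onto H\into H^k(X)$ is represented by an algebraic cycle $Z$, so $(X,Z)$ is already a Grothendieck motive with Hodge realization $H$, but $X$ and $Z$ are only defined over $\C$. After fixing a polarization on $X$, the pair $(X,Z)$ becomes a $\C$-point $t_0$ of a scheme $M$ of finite type over $\Q$, built as a fiber product of Hilbert schemes---one for polarized varieties of the numerical type of $X$, one for cycles of the appropriate class in $X'\times X'$---and cut out by the closed condition $Z'\circ Z'=Z'$. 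Over a smooth open subset $U\subset M$ containing $t_0$, the tautological family gives a polarizable variation of Hodge structure with fibers $H_t=\pi_t(H^k(X_t,\Q))$; the Hodge numbers $h^{p,q}(H_t)=\mathrm{tr}(\pi_t\mid H^{p,q}(X_t))$ are integer-valued and continuous in $t$, hence locally constant, so the gap hypothesis on the Hodge numbers persists throughout the connected component $U_0$ of $t_0$.

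The crucial step is that this variation is isotrivial on $U_0$. Griffiths transversality $\nabla F^p\subset F^{p-1}\otimes\Omega^1_U$ degenerates, under our hypothesis, to $\nabla F^p\subset F^p\otimes\Omega^1_U$, because $F^{p-1}=F^p$ at every level where $F^p\neq F^{p+1}$. Hence each $F^p$ is a flat subbundle; since the Gauss--Manin connection is defined over $\Q$, the conjugate filtration $\bar F^\bullet$ is flat as well, and consequently the Hodge decomposition is preserved under parallel transport. In particular, the fibers $H_t$ at any two points of $U_0$ are isomorphic as $\Q$-Hodge structures. Since $M$ is of finite type over $\Q$, its $\bar\Q$-points are Zariski dense, so $U_0$ contains some $t_1\in M(\bar\Q)$; then $(X_{t_1},Z_{t_1})$ is a motive defined over $\bar\Q$ whose Hodge realization is isomorphic to $H$, as required.

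The main technical obstacle I anticipate is the careful assembly of the moduli $M$ and its smooth stratum $U$, in particular ensuring that the connected component of $t_0$ in $U$ still contains $\bar\Q$-points of $M$ (one may need to pass to a resolution and track the image of $U_0$ in $M$). The conceptual core, and the place where the hypothesis on $H$ enters, is the degeneration of Griffiths transversality to $\nabla F^p\subset F^p\otimes\Omega^1_U$ forced by the gap condition on the Hodge numbers.
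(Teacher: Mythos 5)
Your argument is correct and follows essentially the same route as the paper: realize $H$ as the image of an algebraic idempotent using semisimplicity plus the Hodge conjecture, spread the pair out over a base defined over $\bar\Q$, observe that the gap in the Hodge numbers turns Griffiths transversality into flatness of the Hodge filtration (hence a constant period map), and then specialize to a $\bar\Q$-point. The only differences are in bookkeeping: the paper spreads out via the function field of a smooth variety $S/\bar\Q$, taking $X$ to be the generic fibre of a smooth projective family with flat relative cycles, rather than via Hilbert schemes, and your condition $Z'\circ Z'=Z'$ should be imposed cohomologically rather than as a literal closed condition on cycles --- where it then propagates automatically along the connected component of $t_0$, since cycle classes are locally constant in flat families (which is all the paper uses, via constancy of the rank of the image).
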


\begin{proof}
Fix a period domain $D$ parametrizing polarized Hodge
structures with fixed  Hodge numbers $h^{m,0}=*,
h^{m-1,1}=0,h^{m-2,2}=*,\ldots$ \cite{cmp}.
Pick a point  $t_0\in D$ corresponding to a Hodge structure $H$ which lies in $
H^m(X,\Q)$, where $X$ is a complex smooth projective variety. Since
the category of polarizable Hodge structures is semisimple,  $H$ is the image of an
idempotent $p\in End_{HS}(\bigoplus_i H^i(X,\Q))$. By the Hodge conjecture, this can be lifted to an
algebraic cycle that we also denote by $p=\sum n_ip_i$ on $X\times X$.
The pair $(X,p)$ is defined  over a finitely generated extension of
$K/\bar \Q$. We can regard $K$ as the function field of a  variety
$S/\bar \Q$. After shrinking $S$ if necessary,  we can assume it is smooth and that
$X$ is the generic fibre (based changed to $\C$)
of a smooth projective family $f:\mathcal{X}\to S$. We can also
assume, after further shrinking, that the relative 
cycles  $\mathcal{P}_i\subset \mathcal{X}\times_S\mathcal{X}$
given by the scheme theoretic closure of  the components  $p_i$ are flat
over $S$. Let $\mathcal{P}$ denote the relative correspondence $ \sum n_i\mathcal{P}_i$. Then the
rank of  the image of the
action of the fibre $\mathcal{P}_s$ on $H^*(\mathcal{X}_s(\C))$ is constant as
$s$ varies. It follows that 
$im[\mathcal{P}:R^*f_*\Q\to R^*f_*\Q]$
defines a variation of Hodge structure over $S(\C)$.  Let $\pi:\tilde
S(\C)\to D$ be the associated period map on the universal cover. Now comes the key
point. With our choice of Hodge numbers, Griffiths transversality
forces $\pi$ to be constant. Therefore we can realize $t_0$ by the
motive determined by any pair $(\mathcal{X}_s,\mathcal{P}_s)$ with $s\in
S(\C)$. In particular, we can choose  $s\in S(\bar \Q)$.
\end{proof}

We describe a few examples of Hodge structures arising from motives
over $\bar \Q$. Suppose that $E$ is a
CM elliptic curve. Then as is well known \cite[chap II, \S 2]{silverman}, $E$ is defined over $\bar \Q$. Thus the examples 
$H_E(p,q)$ constructed earlier all arise from motives over $\bar \Q$.
 These examples have CM, and so
are rather special. Recall that a Hodge structure $H$ has CM if
$SMT(H)$ is abelian. It turns out that any irreducible two dimensional
polarizable Hodge structure of even weight has CM. To see this,
observe that the special Mumford-Tate group would be abelian
since it would have to  contain $SO_2$ as a maximal compact. (Our  thanks
to the referee for this remark.) It is not difficult to conclude, as a
consequence,  that two dimensional irreducible Hodge structures of even weight are
of the form $H_E(p,q)$, for some CM elliptic curve $E$.

In addition to the $H_E(p,q)$, there is one very natural source of examples: modular forms.  Let us
explain the basic set up. For
simplicity, we work with the 
principal congruence group $\Gamma(n)=\ker [SL_2(\Z)\to
SL_2(\Z/n)]$ with $n\ge 3$, but there are several other natural choices. Recall that a weight $k$ cusp form   $f\in
S_k(\Gamma(n))$  is given by a
holomorphic function on the upper half plane $\mathbb{H}$ satisfying 
$$f\left(\frac{a z+b}{cz+d}\right) = (cz+d)^{k}f(z),\quad
\begin{pmatrix}
  a & b\\ c& d
\end{pmatrix}\in \Gamma(n)
$$
and such that  the Fourier expansion $f =\sum a_j q^j$, $q=e^{2\pi iz/n}$, has
only positive terms \cite{ds}. 
The  moduli space $Y(n) = \mathbb{H}/\Gamma(n)$ of elliptic curves with  a full level
$n$ structure is a fine moduli space, so it comes with a universal family
$\pi_n:\E(n)\to Y(n)$. This can be completed to a minimal elliptic surface $
\bar \E(n)\to X(n)$ over the nonsingular compactification $j:Y(n)\hookrightarrow X(n)$. By a
theorem of Zucker \cite{zucker}, the intersection cohomology of the
symmetric power
$$H=IH^1(X(n),
S^{k-1} R^1\pi_{n*}\Q) = H^1(X(n), j_*S^{k-1}R^1 \pi_{n*}\Q)$$
 carries a pure Hodge
structure of weight $k$. This Hodge structure turns out to be
isomorphic to one constructed by Shimura \cite[\S 12]{zucker}. It has only $(k,0)$ and $(0, k)$ parts, and the $(k,0)$
part is isomorphic to the space $S_{k+1}(\Gamma(n))$ of cusp forms of weight $k+1$.
We note that $H$ is geometric, since it can be shown to
lie  the
cohomology of a desingularization of the $(k-1)$-fold fibre product
$\bar \E(n)\times_{X(n)}\times\ldots \bar \E(n)$. The quickest way to
see this is by applying the decomposition theorem for Hodge modules
\cite{saito}, although we won't actually need this. The space $X(n)$ has a
large collection of commuting self-correspondences called Hecke operators
\cite{ds}. These operators act on $H$ \cite{scholl}, and they can be used to 
 break it up into pieces.
 Suppose that $f= \sum a_j q^j\in S_{k+1}(\Gamma(n))$ is a suitably normalized nonzero simultaneous  eigenvector  for the Hecke operators,
and further assume that the coefficients $a_j$ are all rational.
Then $f$ and $\bar f$ span a two dimensional Hodge
structure $H(f)\subset H$. These examples arise from
motives over $\Q$  constructed by Scholl \cite{scholl}. We want to
argue that the converse is also true.

Given a smooth complex projective variety  $X$, we  can always choose
a model $X_k$ over a  finitely generated field $k$. Then in addition to 
singular cohomology $H^*(X,\Q)$ with its Hodge structure,  we can consider \'etale cohomology
$H_{et}^*(X_{\bar k},\Q_\ell)\cong H^*(X,\Q)\otimes \Q_\ell$ with its
canonical $Gal(\bar k/k)$-action \cite{milne}.  Given a modular form
$f$ of the above type, we can associate a Galois
representation by replacing ordinary cohomology by  \'etale cohomlogy
in the discussion of the previous paragraph (c.f. \cite{deligne}). 
Serre \cite{serre}  conjectured, and Khare and Wintenberger \cite{kw}
proved, that a certain general class of two dimensional
Galois representations agrees ``modulo $\ell$'' with those coming from
modular forms. Although we won't try to make this precise,  we will
spell out  one  key  consequence \cite[thm
6]{serre}:

\begin{thm}[Serre, Khare-Wintenberger]
  Given a smooth projective variety defined over $\Q$ such that
$H^m(X(\C))$ is two dimensional and of type $(m,0), (0,m)$, then the Galois
representation on $H_{et}^m(X_{\bar \Q},\Q_\ell)$ comes from a modular form of
weight $m+1$. 
\end{thm}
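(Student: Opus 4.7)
The plan is to verify that the \'etale Galois representation attached to $X$ satisfies the hypotheses of Serre's conjecture (the Khare--Wintenberger theorem), and then to upgrade the resulting modularity of the residual representation to modularity of $\rho_\ell$ itself. Let
$$\rho_\ell \colon Gal(\bar \Q/\Q) \longrightarrow GL(H^m_{et}(X_{\bar \Q}, \Q_\ell)).$$
The comparison isomorphism with Betti cohomology makes $\rho_\ell$ two-dimensional and continuous, and smooth and proper base change shows it is unramified outside a finite set $S$ of primes (those of bad reduction of $X$, together with $\ell$).

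First I would pin down the local behavior at $\ell$. Faltings' comparison theorem identifies $\rho_\ell|_{Gal(\bar \Q_\ell/\Q_\ell)}$ as a de Rham representation whose Hodge--Tate weights are the jumps of the Hodge filtration on $H^m(X(\C),\C)$. The hypothesis that $H^m$ has type $(m,0),(0,m)$ forces these weights to be $\{0,m\}$; for $\ell\notin S$ with $\ell > m$ the representation is crystalline in the Fontaine--Laffaille range, so the standard Serre-weight recipe attaches to $\bar\rho_\ell$ the weight $m+1$.

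Next I would verify oddness and residual irreducibility. Complex conjugation $c$ acts on $H^m(X(\C),\Q)$ and exchanges $H^{m,0}$ with $H^{0,m}$, so its eigenvalues on the two-dimensional space are $+1$ and $-1$, giving $\det \rho_\ell(c) = -1$. Note also that the Hodge structure $H^m$ is automatically irreducible: any proper sub-Hodge-structure would have to be purely of type $(m,0)$ or $(0,m)$, which is impossible for a nonzero real Hodge structure of weight $m\ge 1$. For $\ell$ outside an enlarged finite set the residual representation $\bar\rho_\ell$ is therefore absolutely irreducible; otherwise splittings of $\bar\rho_\ell$ for infinitely many $\ell$ would, via standard open image / $\ell$-adic continuity arguments, contradict the irreducibility of $H^m$. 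Applying Khare--Wintenberger to the odd, absolutely irreducible $\bar\rho_\ell$ produces a cuspidal Hecke eigenform $g_\ell$ of weight $m+1$ whose mod $\ell$ Galois representation realizes $\bar\rho_\ell$.

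The main obstacle, and the only non-formal step, is bridging the gap from modularity of $\bar\rho_\ell$ to modularity of $\rho_\ell$ itself by a single characteristic zero eigenform $f\in S_{m+1}(\Gamma_1(N))$. This is accomplished by an $R=T$-style modularity lifting theorem (Wiles--Taylor--Wiles, Diamond, Kisin), whose hypotheses---odd, crystalline with distinct Hodge--Tate weights $\{0,m\}$, residually modular and absolutely irreducible---are precisely what the preceding steps established. Verifying that the residual image is large enough to avoid the exceptional small-image cases of the lifting theorem is the delicate point; with that in hand, the lifting theorem identifies $\rho_\ell$ with $\rho_{f,\ell}$, and $f$ is the required modular form.
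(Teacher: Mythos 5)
You should note at the outset that the paper contains no proof of this statement: it is quoted as a known result, namely Th\'eor\`eme 6 of Serre's Duke paper combined with Khare--Wintenberger's proof of Serre's conjecture. So your proposal is really a reconstruction of Serre's own deduction, and its first half does assemble the right ingredients: two-dimensionality and continuity via the comparison isomorphism, unramifiedness outside a finite set by smooth proper base change, Hodge--Tate weights $\{0,m\}$ from Faltings' comparison and the hypothesis on the Hodge type, Serre weight $m+1$ for $\ell$ large (Fontaine--Laffaille range), and oddness because $F_\infty$ interchanges $H^{m,0}$ and $H^{0,m}$, so $\det\rho_\ell(c)=-1$. This matches the content of Serre's argument for the residual representation.

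There are, however, two genuine gaps. First, your residual irreducibility step is justified by appeal to the irreducibility of the Hodge structure $H^m$; but Hodge-theoretic irreducibility does not control the Galois representation --- that implication is essentially the Hodge/Tate conjecture and is not available here. The unconditional argument uses Deligne's purity together with the fact that de Rham characters of $Gal(\bar\Q/\Q)$ are finite-order twists of powers of the cyclotomic character: a one-dimensional subquotient of $\rho_\ell$ would have Hodge--Tate weight $0$ or $m$ yet Frobenius eigenvalues of absolute value $p^{m/2}$, which is impossible for $m\ge 1$; and even granting $\ell$-adic irreducibility, absolute irreducibility of $\bar\rho_\ell$ for the chosen $\ell$ needs a further argument, not ``continuity.'' Second, your passage from $\bar\rho_\ell$ to $\rho_\ell$ through an $R=T$ modularity lifting theorem leaves exactly its delicate hypothesis (large residual image, absolute irreducibility over $\Q(\zeta_\ell)$) unverified --- you flag this yourself --- so the plan is incomplete at its main step, and it is in any case an unnecessary detour. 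Since the $H^m_{et}(X_{\bar\Q},\Q_\ell)$ form a compatible system with $\ell$-independent rational Frobenius traces (smooth proper base change plus the Weil conjectures) and conductor bounded independently of $\ell$, Serre's argument concludes directly: his conjecture produces, for infinitely many $\ell$, an eigenform of the fixed weight $m+1$ and bounded level congruent to the system mod $\ell$; finiteness of the set of such newforms gives a single $f$ working for infinitely many $\ell$, and Chebotarev plus Brauer--Nesbitt upgrade the congruences to an isomorphism $\rho_\ell\cong\rho_{f,\ell}$. That compatible-system argument, not a lifting theorem, is what the citation [Se, Thm 6] + [KW] packages.
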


If we assume Tate's conjectures \cite{tate}, then this correspondence would
have to come from an isomorphism of the underlying motives. In particular, it should be compatible with
  the Hodge structures. From here it seems a short  step to
conjecture the following: 

\begin{con}\label{con:1}
  Any two  dimensional  Hodge
structure of odd weight associated to a motive over $\Q$ should  be isomorphic to one
given by a modular form.
\end{con}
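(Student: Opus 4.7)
The plan is to bootstrap the Galois-theoretic modularity theorem of Khare--Wintenberger (combined with Fontaine--Mazur in the two-dimensional case) to a motivic isomorphism via the Tate conjecture.

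First I would normalize. Given a motive $M=(X,p)$ over $\Q$ whose Hodge realization is a two-dimensional irreducible Hodge structure $H$ of odd weight $k$, the only nonzero Hodge numbers are $h^{k-a,a}=h^{a,k-a}=1$ for some $a<k-a$, and the level $m=k-2a$ is odd. Since Tate twisting is motivic (one tensors with powers of the Lefschetz motive), I may replace $M$ by $M(a)$ and assume from the outset that $H$ is of type $(m,0),(0,m)$ with $m$ a positive odd integer.

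Second I would pass to the $\ell$-adic realization $\rho_\ell = p\cdot H_{et}^m(X_{\bar\Q},\Q_\ell)$, a continuous two-dimensional representation of $G_\Q$. It is unramified outside a finite set of primes, de Rham at $\ell$ with Hodge--Tate weights $\{0,m\}$, and odd, since complex conjugation has determinant $(-1)^m=-1$. Irreducibility of $H$ combined with the Tate conjecture forces $\rho_\ell$ to be irreducible too, for any Galois-stable line would yield, via Tate, a motivic projector whose Hodge realization would split $H$. I would then apply Khare--Wintenberger, together with the two-dimensional odd case of Fontaine--Mazur due to Kisin and Emerton, to identify $\rho_\ell$ with the Galois representation $\rho_{f,\ell}$ attached by Deligne to a normalized cuspidal Hecke eigenform $f$ of weight $m+1$ on some congruence subgroup; rationality of the Hecke eigenvalues follows from $\rho_\ell$ being defined over $\Q_\ell$ rather than over a nontrivial extension.

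Finally I would invoke the Tate conjecture for $X\times X_f$, where $X_f$ is the variety supporting Scholl's motive $M_f$ attached to $f$, in order to lift the $G_\Q$-equivariant isomorphism $\rho_\ell \cong \rho_{f,\ell}$ to an algebraic correspondence, and hence to an isomorphism of motives $M\cong M_f$. Passing to Hodge realizations yields $H \cong H(f)$, as desired. The main obstacle is precisely this last step: Tate's conjecture is open in the generality required, so the argument is genuinely conditional on Tate in addition to the Hodge conjecture already invoked throughout this section. When both motives happen to lie in the Tannakian subcategory generated by abelian varieties, Faltings's isogeny theorem provides a replacement, but no such shortcut is apparent in general.
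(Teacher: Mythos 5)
The statement you are trying to prove is stated in the paper as a \emph{conjecture}, and the paper offers no proof of it: it only motivates it by the Serre/Khare--Wintenberger theorem together with the remark that ``if we assume Tate's conjectures, then this correspondence would have to come from an isomorphism of the underlying motives.'' Your proposal is essentially a more detailed version of that same heuristic, and it has the same status: the final step, lifting the Galois-equivariant isomorphism $\rho_\ell\cong\rho_{f,\ell}$ to an isomorphism of motives (equivalently, producing an algebraic correspondence between $X$ and Scholl's variety inducing it), is exactly where you must invoke the Tate conjecture, which is open in the required generality. Since the conjecture as stated in the paper is not conditional on Tate, what you have written is an argument that Tate's conjecture (plus modularity results) \emph{implies} the conjecture --- which is precisely the paper's motivation for stating it --- not a proof of the conjecture itself. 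You acknowledge this at the end, but it should be flagged up front: there is no unconditional argument here, and none is known.

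There are also smaller gaps in the intermediate steps. The modularity input quoted in the paper (Serre's Theorem 6, via Khare--Wintenberger) applies when $H^m(X(\C))$ itself is two dimensional of type $(m,0),(0,m)$; for a submotive cut out by a projector $p$ you instead need modularity of general odd two-dimensional geometric $\ell$-adic representations, i.e.\ the Fontaine--Mazur conjecture in the form proved by Kisin and Emerton, which carries hypotheses (on the residual representation, on $\ell$, on ordinarity/de Rham conditions) that you would have to verify or at least discuss rather than cite wholesale. The claim that rationality of the Hecke eigenvalues ``follows from $\rho_\ell$ being defined over $\Q_\ell$'' is not correct as stated: a $\Q_\ell$-rational representation does not force the eigenvalues $a_j$ to lie in $\Q$; one needs the traces of Frobenius to be rational, which for a motive with $\Q$-coefficients requires independence-of-$\ell$ statements that are themselves not known unconditionally. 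Likewise, deducing irreducibility of $\rho_\ell$ from irreducibility of $H$ again uses the Tate conjecture. So each stage of your outline is plausible as a conditional reduction, but none of it closes the gap between ``conjecturally true'' and ``proved,'' which is why the paper leaves the statement as Conjecture \ref{con:1} and contents itself with verifying it in the example of Verrill's rigid Calabi--Yau threefold.
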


This is  hardly an  original idea; a version of this is stated in the
introduction to \cite{fm}, where it is referred to as a ``well known conjecture''.
Having stated this  conjecture, it seems a good idea to actually try to
check it in some cases. A  natural source of two dimensional
  Hodge structures, beyond those already considered, comes
  from the world of Calabi-Yau (or CY)  threefolds. These are three
dimensional smooth projective varieties $X$ with trivial canonical
bundle and hence $h^{30}=1$. Usually one also requires
$h^{10}=h^{20}=0$ but  this plays no role here.  We say that $X$ is rigid if it has no infinitesimal deformations,
i.e.  $H^1(X,T)=0$. By Serre duality, this
is equivalent to the vanishing of $h^{12}=h^{21}$. Thus $H^3(X,\C) =
H^{30}\oplus H^{03}$ is two dimensional. There is by now a large collection
of known examples of rigid  CY's defined over $\Q$,  see \cite{meyer}. In  this note  we will be content to consider
just  one example.  We recall that to  a fan, by which we mean
subdivision of a Euclidean space  into rational polyhedral cones, we can associate a
special kind of algebraic variety called a toric variety \cite{fulton1}. This
construction can be applied  to a root lattice, where the Weyl chambers form a fan. 
Verrill \cite{verrill} constructed a rigid
CY $3$-fold $V$ by first starting with the toric variety associated to the
root lattice $A_3$,   taking a double cover branched over an appropriate divisor in twice
the  anti-canonical system, and then choosing a suitable desingularization.
It can be described  more directly as a
desingularization of  the hypersurface 
$$ (x+y+z+w)(yzw+zxw+xyw+xyz)t - (t + 1)^2 · xyzw = 0$$
  in $\PP^3\times \PP^1$ where  $t$ an inhomogeneous coordinate on the
  second factor. Using this equation, Saito and Yui  \cite[thm 5.1]{sy} gives an explicit birational equivalence
  between Verrill's variety $V$ and a desingularization $Y$  of the  fibre product  of the elliptic modular surface
  associated to the congruence group $\Gamma_1(6)\subset SL_2(\Z)$
  with itself. Since $h^{30}$ is a birational invariant,
  $h^{30}(Y)=1$. So under this birational equivalence
  $H^3(V)$ will correspond to the necessarily unique irreducible sub Hodge structure of
  $H^3(Y)$ of type $(3,0), (0,3)$. On the other hand $IH^1(X_1(6),
  S^2R^1\pi_*\Q)\subseteq H^3(Y)$ is also of this type. So these must
  match, and the conjecture is verified in this example.

\end{document}